\newtheorem{theorem}{Theorem}
\newtheorem{lemma}[theorem]{Lemma}
\newtheorem{proposition}[theorem]{Proposition}
\newtheorem{definition}[theorem]{Definition}
\newtheorem{remark}[theorem]{Remark}
\newtheorem{example}[theorem]{Example}
\begin{document}




 \email{baravi@mat.ufrgs.br}
 \email{oliveira.elismar@gmail.com}
 \email{fagnerbernardini@gmail.com}

\thanks{The first author is partially supported by CNPq, CAPES and FAPERGS}



\bigskip

\title[A note on the dynamics of linear automorphisms]{A note on the dynamics of linear automorphisms of a measure convolution algebra}

\date{\today}
\maketitle
\centerline{\scshape A. Baraviera, E. Oliveira and F. B. Rodrigues}
\medskip
{\footnotesize
 \centerline{Instituto de Matem\'atica-UFRGS}
   \centerline{ Avenida Bento Gon\c{c}alves 9500 Porto Alegre-RS Brazil}
} 

\begin{abstract} In this work we are going to study the dynamics of the linear automorphisms of a measure convolution algebra over a finite group, $T(\mu)=\nu * \mu$. In order to understand an classify the asymptotic behavior of this dynamical system we provide an alternative to classical results, a very direct way to understand  convergence of the sequence $\{\nu^{n}\}_{n\in\mathbb{N}}$,
where $G$ is a finite group, $\nu\in\mathcal{P}(G)$ and $\nu^n=\underbrace{\nu\ast...\ast\nu}_n$, trough the subgroup generated by his support.
\end{abstract}

\textbf{Keywords} Dynamics, Linear Automorphism, Probability measures, convolution, convergence, finite groups.

\textbf{Mathematics subject classification (2010)}	28A33


\section{Introduction}
    The space of probabilities on a metric space $G$
    (or more generally, Radon Measures) has two natural classes of linear automorphisms, the first one is the push forward (induced by some fixed map $f \colon G \to G$),
    that has been extensively studied by  Sigmund (in \cite{K. Bauer}) and Komuro (in \cite{Komuro}); more recently it also appears  in
    Kloeckner (in \cite{kloeckner}) for example, and just take in consideration the linear structure of the space of measures. The second one, when $G$ is
    a topological group, is based on the convolution of two measures.
    In this case the space of Radon measures is an infinite
    dimensional Banach algebra, with respect to the convolution operation,
    that is, a Measure Convolution Algebra  (see \cite{Bob} pg 73 and \cite{J.Wermer}).
    Hence the other natural linear automorphism is $T(\mu)=\nu * \mu$,
    for a fixed measure $\nu$.
    In this way we propose to understand the topological dynamics
    of this map. The iteration of $T$ lead us to analyze the powers of convolutions of $\nu$,
    since from basic properties of the operation $*$ we get
    that iterating $n$ times the map $T_{\nu}$ is the convolution $\nu^{n} * \mu$.



     The problem of study powers of convolution of probability measures has been studied in several papers in the last few years and has several applications in statistics and group theory (see \cite{HogM} and \cite{Chakraborty}). In a general setting $G$ is a compact topological group,
     $\mathcal{P}(G)$ is the set of all probability measures on $G$ and $\nu\in\mathcal{P}(G)$.


     The the main goal of this paper is to establish direct conditions on the support of measure $\nu$, what is quite natural from the ergodic point of view,  to  ensure convergence of  the sequence  $\{\nu^n:=\underbrace{\nu\ast...\ast\nu}_{n}\}_{n\in\mathbb{N}}$.

     We study the asymptotic behavior of the sequence $\{\nu_n\}_{n\in\mathbb{N}}$
     on a finite group, with a complete description of the accumulation points of that sequence, that is, the limit sets of the dynamics $T_{\nu}$. The main point in this note is that our presentation follows a dynamical point of  view, and the main result is obtained with the use of Perron-Frobenius Theorem  (see \cite{PF}).

     We like to point out that our results on the convergence of power are not necessarily new, or a replacement of the classical literature, but just easier to compute and to apply. In the best of our knowledge, there is no direct way to extract this kind of characterization of the limit powers just from the necessary and sufficient, or just sufficient conditions for convergence, that we find on the previous works. More than that, our characterization make use of much more elementary results of analysis and algebra.

     Many of the ideas developed here can be immediately applied to compact (or locally compact) topological groups, but the results will be more abstract, restricted and not computational.

\subsection{Main result}

      In this text we present the following
      \vspace{.3 cm}
\begin{theorem}
Let $G=\{g_0,...,g_{n-1}\}$ be a finite group. If $\nu\in\mathcal{P}(G)$ is an acyclic probability and
$H$ is the subgroup generated by the support of $\nu$, then
$$
\lim_{n\to\infty}\nu^{n}=\sum_{h\in H}\frac{1}{|H|}\delta_h.
$$
\end{theorem}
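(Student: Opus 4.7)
The plan is to recast convolution by $\nu$ as multiplication by a doubly stochastic matrix and then invoke the Perron--Frobenius theorem. First, since $\supp(\nu^{k})\subseteq H$ for every $k\ge 1$, I may regard $\nu$ as a probability on $H$ and replace the ambient group $G$ by $H$; this reduces matters to the case in which $\supp(\nu)$ generates the whole ambient finite group, which I rename $G$ for simplicity. Enumerating $G=\{g_0,\dots,g_{m-1}\}$, identify each $\mu\in\mathcal{P}(G)$ with the column vector $(\mu(g_0),\dots,\mu(g_{m-1}))^{T}$. A direct unfolding of the definition $(\nu*\mu)(g_i)=\sum_{h}\nu(h)\mu(h^{-1}g_i)$ shows that $T(\mu)=\nu*\mu$ is multiplication by the matrix $M$ with entries $M_{ij}=\nu(g_ig_j^{-1})$, and consequently $\nu^{k}$ corresponds to the vector $M^{k-1}\nu$.

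Because $i\mapsto g_ig_j^{-1}$ (for fixed $j$) and $j\mapsto g_ig_j^{-1}$ (for fixed $i$) are both bijections of $G$, the row and column sums of $M$ equal $\sum_{g\in G}\nu(g)=1$, so $M$ is doubly stochastic. Irreducibility follows from the generating hypothesis: in the finite group $G$ every element is a product of elements of $\supp(\nu)$ alone (since $g^{-1}$ is a positive power of $g$), so for each pair $(i,j)$ some $k$ makes $(M^{k})_{ij}=\nu^{k}(g_ig_j^{-1})$ strictly positive. Aperiodicity of $M$, that is $\gcd\{k\ge 1:\nu^{k}(e)>0\}=1$, is exactly the content of the acyclicity hypothesis on $\nu$.

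With $M$ primitive, Perron--Frobenius gives $\lim_{k\to\infty}M^{k}=vw^{T}$, where $v$ is the right and $w$ the left Perron eigenvector for the simple eigenvalue $1$, normalized so $w^{T}v=1$. Double stochasticity of $M$ forces $v=\mathbf{1}$ and $w=|G|^{-1}\mathbf{1}$, hence $M^{k}\to |G|^{-1}\mathbf{1}\mathbf{1}^{T}$. Evaluating at the probability vector representing $\nu$ yields $\nu^{k}\to |G|^{-1}\sum_{g\in G}\delta_{g}$, which (after replacing $G$ by $H$ to undo the initial reduction) is the desired statement.

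The principal obstacle is the translation between the ``acyclic'' hypothesis on $\nu$ and the combinatorial aperiodicity condition $\gcd\{k:\nu^{k}(e)>0\}=1$ needed to invoke the strong form of Perron--Frobenius. In algebraic language this amounts to verifying that $\supp(\nu)$ is not contained in a coset of any proper subgroup of $H$; once this is done, the remainder of the argument is a straightforward combination of basic linear algebra and the elementary group-theoretic bijections used above.
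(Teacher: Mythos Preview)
Your approach is essentially the paper's: represent convolution by $\nu$ as multiplication by a doubly stochastic matrix and apply Perron--Frobenius. Your initial reduction to $H$ (using $\supp(\nu^{k})\subseteq H$) is a clean shortcut; the paper instead keeps the ambient group $G$, decomposes it into cosets of $H$, shows that the convolution matrix $\nu(G^{-1}\times G)$ is block-diagonal with every diagonal block equal to the $|H|\times|H|$ matrix $\nu(H^{-1}\times H)$, and then proves that single block primitive. Both routes arrive at the same Perron--Frobenius application.

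One clarification: the ``principal obstacle'' you flag is not an obstacle at all. In the paper's Definition~\ref{Acyclic}, $\nu$ is acyclic precisely when there exists $N$ with $Z_{+}(\nu)^{N}=H$, i.e.\ $\supp(\nu^{N})=H$. Since in your notation $(M^{N})_{ij}=\nu^{N}(g_ig_j^{-1})$, this says exactly that every entry of $M^{N}$ is strictly positive. Thus the acyclic hypothesis \emph{is} primitivity of $M$; you need not argue irreducibility and aperiodicity separately, and nothing about cosets of proper subgroups needs to be verified. Once you observe this, your last paragraph can simply be deleted.
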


We also get an interesting result when the probability measure $\nu $ is not acyclic, that is used in the last section in order to obtain a solution for the Choquet-Deny equation.

\section{Proof of the main Theorem  }

We will always denote by $(G=\{g_0, g_1, ..., g_{n-1}\},\cdot)$
a finite group of order $n$ where $g_0 = e$ is the neutral element of the operation $``\cdot"$.
\\
We remind that the space of real continuous functions in $G$, $C(G,\mathbb{R})$ is identified with $\mathbb{R}^{n}$ and we denote
a function $f\in C(G,\mathbb{R})$ by the row vector
$$f(G)=(f(g_0), f(g_1), ..., f(g_{n-1})) \in \mathbb{R}^{n}.$$
As usual, the dual of $C(G,\mathbb{R})$ is identified with $(\mathbb{R}^{n})^{*} \simeq \mathbb{R}^{n}$, is the space of signed measures over $G$,
$$C(G,\mathbb{R})'= \left\{\mu= \sum_{i=0}^{n-1} p_i \delta_{g_{i}}, \; p=(p_0, p_1, ..., p_{n-1}) \in \mathbb{R}^{n} \right\}.$$

in this work we denote
$$\int_{G} f d\mu= \sum_{i=0}^{n-1} p_i f(g_{i})=\langle f(G), p \rangle,$$
where $\langle \cdot , \cdot \rangle$ is the usual scalar product.

In this setting, if $\Delta_n= \{p \in \mathbb{R}^{n} \;|\; p_i \in [0,1],\text{ and } \sum_{i=0}^{n-1} p_i=1\}$ then
$$\mathcal{P}(G)= \left\{\sum_{i=0}^{n-1} p_i \delta_{g_{i}} \in C(G,\mathbb{R})' \; | \; p \in \Delta_n \right \}.$$

If $\nu= \displaystyle\sum_{i=0}^{n-1} p_i \delta_{g_{i}}$ and $ \mu=\displaystyle\sum_{i=0}^{n-1} q_i \delta_{g_{i}}$ we define the convolution between them as
$$(\nu*\mu) (f)=\int_Gfd(\nu\ast\mu)= \int_{G} \int_{G} f(g    h) d\nu(g) d\mu(h).$$

Defining $f(G^2)$ as
$$f(G^2)=
\left[
  \begin{array}{ccc}
    f(g_{0}   g_{0}) & \cdots & f(g_{0}   g_{n-1}) \\
    \vdots & \ddots & \vdots \\
    f(g_{n-1}   g_{0}) & \cdots & f(g_{n-1}   g_{n-1}) \\
  \end{array}
\right]
$$
we get an characterization of the convolution in coordinates.
\begin{lemma}\label{representação bilinear}
If $ \nu=\displaystyle \sum_{i=0}^{n-1} p_i \delta_{g_{i}} \simeq p$ and $ \mu=\displaystyle \sum_{i=0}^{n-1} q_i \delta_{g_{i}} \simeq q$ then
$$(\nu*\mu) (f)=  \langle q,  f(G^2)\cdot p \rangle.$$
\end{lemma}
\begin{proof}
Indeed,
\begin{align*}
(\nu*\mu) (f) & = \int_{G} \int_{G} f(g    h) d\nu(g) d\mu(h)  = \int_{G} \sum_{i=0}^{n-1} p_i f(g_{i}    h)  d\mu(h)\\
& = \displaystyle\sum_{i=0}^{n-1}\displaystyle \sum_{j=0}^{n-1}q_i  p_j  f(g_{i}    g_{j})    = \langle q,  f(G^2) \cdot p\rangle.
\end{align*}

\end{proof}

Since, $\mathcal{P}(G)$ is an affine space of codimension 1 in $C(G,\mathbb{R})'$ we know that $\nu(G\times G)$ is given by an bi-stochastic matrix.
In order to get the next result we define a new matrix obtained by a measure $\nu \simeq (p_0,...,p_{n-1}) \in\mathcal{P}(G)$.
If we denote,
$$G^{-1} \times G=
\left[
  \begin{array}{ccc}
    g_{0}^{-1}   g_{0} & \cdots & g_{0}^{-1}   g_{n-1} \\
    \vdots & \ddots & \vdots \\
    g_{0}^{-1}   g_{n-1} & \cdots & g_{n-1}^{-1}   g_{n-1} \\
  \end{array}
\right],
$$
then
$$\nu(G^{-1} \times G)=
\left[
  \begin{array}{ccc}
    \nu(g_{0}^{-1}   g_{0}) & \cdots & \nu(g_{0}^{-1}   g_{n-1}) \\
    \vdots & \ddots & \vdots \\
    \nu(g_{0}^{-1}   g_{n-1}) & \cdots & \nu(g_{n-1}^{-1}   g_{n-1}) \\
  \end{array}
\right],
$$
where $\nu(g_{i}^{-1} * g_{j})= p_m$ if $g_{i}^{-1} * g_{j}=g_m$.

\begin{lemma}\label{rep. da aplicação conv}
Given $\nu,\mu\in\mathcal{P}(G)$, then
$$\nu * \mu=\mu \cdot \nu(G^{-1} \times G).$$
\end{lemma}

\begin{proof}
If $\nu=\displaystyle \sum_{i=0}^{n-1} p_i \delta_{g_{i}}$ and $ \mu= \displaystyle\sum_{i=0}^{n-1} q_i \delta_{g_{i}}$ we set
$\nu\ast \mu= \displaystyle\sum_{k=0}^{n-1} a_k \delta_{g_{k}}$.

From the Lemma \ref{representação bilinear} we know that,
$$a_k =\displaystyle \sum_{g_{i}    g_{j}= g_k}  p_i q_j =\displaystyle \sum_{i=0}^{n-1}  \{q_i p_j \;|\; g_{j}=g_{i}^{-1}   g_k\}.$$
Since the equation $g_{i}    g_{j}= g_k$ has an unique solution for a fixed $k$ and for each $i$ we have $j(i, k)$ well determined. It allows us to write,
$$a_k = q_0 \cdot p_{j(0, k)} + ... + q_{n-1} \cdot p_{j(n-1, k)}.$$

Using matrices we have
$$
\left[
  \begin{array}{ccc}
    a_{0}  & \cdots & a_{n-1}\\
  \end{array}
\right] =
\left[
  \begin{array}{ccc}
    q_{0}  & \cdots & q_{n-1}\\
  \end{array}
\right] \cdot
\left[
  \begin{array}{ccc}
    p_{j(0, 0)} & \cdots & p_{j(n-1, 0)} \\
    \vdots & \ddots & \vdots \\
    p_{j(0, n-1)} & \cdots & p_{j(n-1, n-1)} \\
  \end{array}
\right].
$$
and we get the first formula
$$\nu * \mu=\mu \cdot \nu(G^{-1} \times G).$$
\end{proof}
Thus, if we desire to compute the powers of the convolution  $\nu\ast\nu$ we have
$$\nu^n:=\underbrace{\nu\ast\ldots\ast\nu}_{m+1}= \nu\cdot\nu(G^{-1} \times G)^{m},$$
so we can estimate the long time behavior of $\nu^n$ from the powers of the matrix  $\nu(G^{-1} \times G)$.

\begin{example}
We consider $G=(\mathbb{Z}_{3}, +)$ and $\nu= (1/3,1/4,5/12)$. So
$$
G^{-1} \times G= \left[
  \begin{array}{ccc}
    0 & 1 & 2 \\
    2 & 0 & 1 \\
    1 & 2 & 0 \\
  \end{array}
\right]\ \mbox{ and }
\nu(G^{-1} \times G)=
\left[
  \begin{array}{ccc}
    1/3 & 1/4 & 5/12 \\
    5/12 & 1/3  & 1/4 \\
    1/4 & 5/12 & 1/3  \\
  \end{array}
\right]
.$$
\end{example}

\begin{definition}
A stochastic matrix $A = (a_{ij})$ is called primitive
 if there is $N \in\mathbb{N}$ such that all the entries of the matrix $A^N$ are positive.
\end{definition}

\begin{definition}
A  matrix A with non-negative entries is called  doubly-stochastic
if its  rows and columns sum 1.
\end{definition}

The following will be very useful in what follows

\begin{theorem}(Perron-Frobenius)\label{teorema da convergencia da matriz}
If A is $n\times n$ primitive and doubly stochastic matrix, then
$$\lim_{m\to\infty}A^{m}=\frac{1}{n}J,$$
where $J = (a_{ij})$, $a_{ij} = 1$ for all i, j.
\end{theorem}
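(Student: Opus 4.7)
The plan is to combine the classical Perron--Frobenius theorem for primitive matrices with the special structural features of doubly stochastic matrices, and then identify the limiting rank-one projection explicitly as $\frac{1}{n}J$.

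First I would record two eigenvalue facts. Since the rows of $A$ sum to $1$, the column vector $\mathbf{1}=(1,\dots,1)^{T}$ satisfies $A\mathbf{1}=\mathbf{1}$, so $1$ is a right eigenvalue with eigenvector $\mathbf{1}$. Since the columns of $A$ also sum to $1$, the row vector $\mathbf{1}^{T}$ satisfies $\mathbf{1}^{T}A=\mathbf{1}^{T}$, so $1$ is also a left eigenvalue with left eigenvector $\mathbf{1}^{T}$. A Gershgorin-disc argument, applied to either $A$ or $A^{T}$, then shows that every eigenvalue of $A$ lies in the closed unit disc, so $\rho(A)=1$.

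Next I would invoke Perron--Frobenius for primitive matrices: because $A^{N}>0$ for some $N$, the spectral radius $\rho(A)$ is a simple eigenvalue of $A$ and every other eigenvalue $\lambda$ satisfies $|\lambda|<\rho(A)$. Combined with the previous step, this means $1$ is a simple eigenvalue of $A$ and all other eigenvalues lie strictly inside the unit disc. Passing to the Jordan decomposition $A=PJP^{-1}$, the block corresponding to eigenvalue $1$ is $1\times 1$ and contributes a rank-one term that is invariant under powers, while each Jordan block for an eigenvalue $\lambda$ with $|\lambda|<1$ satisfies $J_{\lambda}^{m}\to 0$ as $m\to\infty$. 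Hence $A^{m}$ converges to the spectral projection $E$ onto the one-dimensional eigenspace spanned by $\mathbf{1}$, parallel to the sum of the remaining generalized eigenspaces.

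Finally I would identify $E$ explicitly. Because $E$ has rank one with range $\mathrm{span}(\mathbf{1})$ and its dual eigenspace is $\mathrm{span}(\mathbf{1}^{T})$ (the unique left eigenspace for eigenvalue $1$), $E$ must be the rank-one matrix
$$
E=\frac{\mathbf{1}\mathbf{1}^{T}}{\mathbf{1}^{T}\mathbf{1}}=\frac{1}{n}J,
$$
the normalization being fixed by $E^{2}=E$. Therefore $\lim_{m\to\infty}A^{m}=\tfrac{1}{n}J$, as claimed.

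The main obstacle is not the spectral argument itself but the clean identification of the limit as $\tfrac{1}{n}J$: one needs both the right eigenvector $\mathbf{1}$ and the left eigenvector $\mathbf{1}^{T}$ for eigenvalue $1$, which is precisely where the \emph{doubly} stochastic hypothesis (as opposed to merely stochastic) enters. Without it the limit would still exist but would have the form $\mathbf{1}\pi^{T}$ for some probability vector $\pi\neq\tfrac{1}{n}\mathbf{1}$, and the symmetric conclusion $\tfrac{1}{n}J$ would fail.
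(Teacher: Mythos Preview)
Your argument is correct and is essentially the standard route: doubly stochasticity gives $\mathbf{1}$ and $\mathbf{1}^{T}$ as right and left Perron eigenvectors, primitivity forces $1$ to be a simple eigenvalue with all others strictly inside the unit disc, and the Jordan-form computation then yields convergence to the rank-one projector $\mathbf{1}\mathbf{1}^{T}/n=\tfrac{1}{n}J$.

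There is, however, nothing to compare against in the paper itself. The paper does not supply a proof of this theorem; it is stated as a quoted result (with a reference to Berman and Plemmons, \emph{Nonnegative Matrices in the Mathematical Sciences}) and then used as a black box in the proof of Proposition~\ref{main theorem}. So your proposal is not an alternative to the authors' argument but rather a self-contained justification of a result they take for granted. If you want to match the paper's treatment, a one-line citation suffices; if you want to be self-contained, what you wrote is exactly what is needed.
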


\begin{definition}
Let G be a finite abelian group of order n. We say that $G$ is finitely
generated if there exist $g_{1},...,g_{k}\in G$ such that for all $g\in G$ we have that
 $g=g_{1}^{r_{1}}\cdot\cdot\cdot g_{k}^{r_{k}}$, with $r_{j}\in\{0,1,...,n\}$.
\end{definition}
We remember the definition of the support of a given measure.
Let $G$ a finite group and $\nu=(p_0,...,p_{n-1})\in\mathcal{P}(G)$. The support of $\nu$ is the set
$$\mbox{supp}(\nu)=\{g_i\in G:\nu(g_i)=p_i>0\}.$$
We will denote by $H$ the subgroup of $G$ generated by $\mbox{supp}(\nu)$, i.e., $H=\langle\mbox{supp}(\nu)\rangle$.
In order to get the next result we need a new definition and a Lemma.  We start with the definition:

\begin{definition}\label{Acyclic} (Acyclic)
Given $\nu\in\mathcal{P}(G)$, we define the set $Z_{+}(\nu)^{m}$ by
$$Z_{+}(\nu)^{m}=\{g_{i_1}...g_{i_m}:g_{i_k}\in\mbox{supp}(\nu)\}.$$
Let $H$ the subgroup of $G$ generated by $\mbox{supp}(\nu)$.
We say that $\nu$ is a acyclic probability measure if there exist $N\in\mathbb{N}$ such that
$Z_{+}(\nu)^{N}=H$. In particular, $Z_{+}(\nu)^{1}=\mbox{supp}(\nu)$.
\end{definition}

We would like to observe that, the acyclic property is similar to \cite{Chakraborty} for probabilities in matrices, but in that case the convergence is given by a rank theorem.

\begin{example}
(a) Let $g\in G$ an element of order 2 and $\nu=\delta_g$. In this case $H=\{e,g\}$ and
$$Z_{+}(\nu)^{m}=\left\{
                 \begin{array}{ll}
                   e, & \hbox{ if } m \hbox{ is even} \\
                   g, & \hbox{ if } m \hbox{ is odd.}
                 \end{array}
               \right.$$
From it follows that $\nu$ is not  acyclic probability measure.
\\
(b) Let $H$ a cyclic group generated by $g$ and $\nu=\alpha\delta_e+(1-\alpha)\delta_g$, $0<\alpha<1$.
Then $\nu$ is a acyclic. In fact, if $H=\{e,g,...,g^{n-1}\}$, then
$$Z_{+}(\nu)^{n}=\{e^n,e^{n-1}g,e^{n-2}g^{2},...e^1g^{n-1}\}=H.$$
\end{example}

\begin{example}\label{exemplo de acyclic}
Let G be a finite abelian group of order n and $\nu\in\mathcal{P}(G)$. We can make the identification $\nu =\sum_ip_i\delta_{g_i}\simeq p=(p_0,...,p_{n-1})$. If $Z_{+}(p)=\{g,h\}$
and $H=\langle g^{-1}h\rangle$, then $\nu$ is a acyclic. In fact, to see it we only need to notice that
$$g^{n-k}h^{k}=(g^{-1}h)^{k}.$$
\end{example}

\begin{example}
Let $H=\left \langle g_1,...,g_k\right\rangle$ be a
finitely generated abelian subgroup of $G$ and
$\nu\in\mathcal{P}(G)$.
  If $\nu\in\mathcal{P}(G)$ is such $$Z_{+}(\nu)=\{ e,g_1,...,g_k\}$$
 $\nu$ is a acyclic.
\end{example}

\begin{remark}
If we have that $|G|=n$ and the support
of $\nu$ has more than $ \displaystyle\frac{n}{2}+1$ elements,
then $\nu$ is acyclic, in particular $\nu(G^{-1} \times G)$ primitive.
\end{remark}

When the probability $\nu$ is acyclic we have the following proposition:
\begin{proposition}\label{main theorem}
Let $G=\{g_0,...,g_{n-1}\}$ a finite group, $\nu\in\mathcal{P}(G)$ acyclic and $H=\langle Z _{+}(\nu)\rangle$ . The
matrix $\nu(G^{-1}\times G)=(\nu(g_i^{-1}  g_j))_{i,j}$ satisfies
$\displaystyle \lim_{n\to\infty} \nu(G^{-1}\times G)^{n}=B$, where $B$ is the matrix
given by
\begin{align*}
\left(
  \begin{array}{cccc}
    \frac{1}{|H|}J & 0 &\ldots&0  \\
    0 &  \frac{1}{|H|}J& 0&\ldots \\
    \vdots & \vdots& \vdots &\vdots\\
0&\ldots&0&\frac{1}{|H|}J\\
  \end{array}
\right),
\end{align*}
 where $0$ is the null matrix of order $|H|$ and $J$ is the matrix of order $|H|$, with all the coefficients equal to 1.
\end{proposition}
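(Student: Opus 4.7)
The proof will rest on two structural observations about $M := \nu(G^{-1}\times G)$, combined with the Perron--Frobenius theorem stated just above. First I would reorder the enumeration of $G$ so that elements of the same coset of $H$ are listed contiguously; this is only a simultaneous permutation of rows and columns of $M$ and does not affect powers. Since $\mathrm{supp}(\nu)\subset H$, the entry $M_{ij}=\nu(g_i^{-1}g_j)$ vanishes whenever $g_i$ and $g_j$ lie in different cosets, so in this ordering $M$ is block-diagonal with $[G:H]$ blocks of size $|H|$. Moreover, for any coset $xH$, writing $g_i=xh_i$ gives $g_i^{-1}g_j=h_i^{-1}h_j$, so each diagonal block is the same $|H|\times|H|$ matrix $A:=\nu(H^{-1}\times H)$ with entries $\nu(h_i^{-1}h_j)$.

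Next I would check that $A$ satisfies the hypotheses of the Perron--Frobenius theorem. Double stochasticity is immediate: for fixed $h_i$ the map $h_j\mapsto h_i^{-1}h_j$ is a bijection of $H$, so $\sum_j \nu(h_i^{-1}h_j)=\sum_{h\in H}\nu(h)=1$ (because $\mathrm{supp}(\nu)\subseteq H$), and the column argument is symmetric. For primitivity, I would invoke the identity
\begin{equation*}
(A^k)_{ij}=\sum_{h_1\cdots h_k=h_i^{-1}h_j}\nu(h_1)\cdots\nu(h_k)=\nu^k(h_i^{-1}h_j),
\end{equation*}
which comes from expanding the matrix product and reindexing by the group relations (exactly the computation used in Lemma~\ref{rep. da aplica��o conv}). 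Consequently $(A^k)_{ij}>0$ if and only if $h_i^{-1}h_j\in Z_+(\nu)^k$. The acyclic hypothesis supplies $N$ with $Z_+(\nu)^N=H$, and then every entry of $A^N$ is strictly positive, so $A$ is primitive.

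Applying Theorem~\ref{teorema da convergencia da matriz} to $A$ yields $\lim_{m\to\infty}A^m=\frac{1}{|H|}J$, where $J$ is the $|H|\times|H|$ all-ones matrix. Because $M$ is block-diagonal with each diagonal block equal to $A$, one has $M^m=\mathrm{diag}(A^m,\ldots,A^m)$ for every $m$, and passing to the limit gives exactly the matrix $B$ displayed in the proposition.

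The only genuinely substantive step is the primitivity of $A$, and this is where the acyclic hypothesis is used in an essential way; without it one can at best hope for convergence along a subsequence (compare the order-two example given earlier in the paper). The rest of the argument is bookkeeping: the coset decomposition of $M$, the double stochasticity of $A$, and the translation between $k$-fold convolutions and $k$-th matrix powers. I expect that the cleanest exposition isolates the identity $M^k=\nu^k(G^{-1}\times G)$ (restricted to $H$ gives the identity above) as a single line, after which the acyclic condition and Perron--Frobenius do the rest.
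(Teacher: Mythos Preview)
Your proposal is correct and follows essentially the same route as the paper: reorder $G$ by left cosets of $H$, observe that the off-diagonal blocks vanish because $\mathrm{supp}(\nu)\subset H$ (the paper's Lemma~\ref{lema da matriz nula}), note that each diagonal block equals $A=\nu(H^{-1}\times H)$, verify that $A$ is primitive via the acyclic hypothesis (the paper's next lemma), and conclude with Perron--Frobenius. Your explicit check of double stochasticity and the clean identity $(A^k)_{ij}=\nu^k(h_i^{-1}h_j)$ are slight expository improvements, but the underlying argument is the same.
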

\begin{proof}
The prove of this result follows form the lemmas below.
\end{proof}
\begin{remark}
Let us consider an acyclic probability $\nu\in\mathcal{P}(G)$,
and the subgroup $H$ generated by $Z_{+}(\nu)$. We suppose that $|H|=n$.
  We can take the equivalence classes determined by $H$ in $G$, i.e,
$$gH=\{g  h:h\in H\}.$$
We know that $G$ can be written as a disjoint union of the equivalence classes determined by $H$,
and as $G$ is finite $H$ is also a finite group. Then we can write $G$ as follows
\begin{align*}
G&=\{e,h_1,...,h_k,g_1h_1,...,g_1,g_1h_k,g_2h_1,...,g_2,g_2h_k,...,g_l,g_lh_1,...,g_lh_k\}
\\&=\{H, g_1H,...,g_lH\},
\end{align*}
where $g_iH\cap g_jH=\emptyset$ for $i\not=j$. Then we have that the matrix $A=\nu(G^{-1}\times G)$ is given by
\begin{align*}
A={\footnotesize\left(
  \begin{array}{cccc}
    \nu(H^{-1}\times H) & \nu(H^{-1}\times g_1H) & \ldots &\nu(H^{-1}\times g_lH) \\
    \nu(H^{-1}g_{1}^{-1}\times H) & \nu(H^{-1}g_{1}^{-1}\times g_1H) & \ldots & \nu(H^{-1}g_{1}^{-1}\times g_l H) \\
    \vdots & \vdots & \vdots & \vdots \\
    \nu(H^{-1}g_{l}^{-1}\times H) & \ldots & \ldots & \nu(H^{-1}g_{l}^{-1}\times g_lH) \\
  \end{array}
\right),}
\end{align*}
where the blocks in the diagonal are always the matrix $\nu(H^{-1}\times H)$.
\end{remark}

\begin{lemma}\label{lema da matriz nula}
The blocks $\nu(H^{-1}g_{i}^{-1}\times g_j H)$, $\nu(H^{-1}g_{i}^{-1}\times H)$
and $\nu(H^{-1}\times g_j H)$  are always the null matrix for $i\not=j$.
\end{lemma}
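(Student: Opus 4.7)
The crucial fact is that $\supp(\nu)\subseteq H$, so $\nu(g)=0$ for every $g\in G\setminus H$. My plan is therefore to show that each entry of each off-diagonal block evaluates $\nu$ at a group element lying outside $H$, forcing the entry to vanish.

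First, I would inspect a generic entry of the block $\nu(H^{-1}g_i^{-1}\times g_jH)$ with $i\neq j$. The rows of this submatrix are indexed by elements of the form $(g_ih)^{-1}=h^{-1}g_i^{-1}$ with $h\in H$, and the columns by $g_jh'$ with $h'\in H$; the corresponding entry is
\[
\nu\bigl(h^{-1}g_i^{-1}g_jh'\bigr).
\]
Because $h,h'\in H$ and $H$ is a subgroup, the product $h^{-1}g_i^{-1}g_jh'$ lies in $H$ if and only if $g_i^{-1}g_j\in H$, i.e.\ $g_iH=g_jH$. For $i\neq j$ the cosets $g_iH$ and $g_jH$ are disjoint, so the argument of $\nu$ sits outside $H$ and the entry is zero.

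The remaining two blocks $\nu(H^{-1}g_i^{-1}\times H)$ and $\nu(H^{-1}\times g_jH)$ are the special cases $g_j=e$ and $g_i=e$ respectively: their entries are of the form $\nu(h^{-1}g_i^{-1}h')$ and $\nu(h^{-1}g_jh')$, which by the same reasoning lie in $H$ iff $g_i\in H$ (resp.\ $g_j\in H$). Since $g_1,\ldots,g_l$ are non-trivial coset representatives chosen outside $H$, this never occurs and both blocks vanish.

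I do not expect a serious obstacle. The only care I would take is to write the rows and columns explicitly so that the block being analysed really corresponds to $\nu(h^{-1}g_i^{-1}g_jh')$; once the indexing is matched, the statement is a one-line consequence of coset disjointness together with the containment $\supp(\nu)\subseteq H$.
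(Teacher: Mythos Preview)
Your proposal is correct and follows essentially the same route as the paper: both arguments note that an entry $\nu(h^{-1}g_i^{-1}g_jh')$ can be positive only if its argument lies in $\supp(\nu)\subseteq H$, which forces $g_i^{-1}g_j\in H$ and hence $g_iH=g_jH$, contradicting the disjointness of distinct cosets. Your write-up is in fact slightly more explicit than the paper's, which treats one block in detail and dismisses the remaining two with ``by analogous computations''; your observation that those two blocks are simply the special cases $g_j=e$ and $g_i=e$ is exactly the right way to organise it.
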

\begin{proof}
Take the block $\nu(H^{-1}g_{1}^{-1}\times g_2 H)$ and notice that
\begin{align*}
 \nu((  h_{i}^{-1}g_{1}^{-1}) (g_2  h_j))>0
&\Leftrightarrow (  h_{i}^{-1}g_{1}^{-1}) (g_2  h_j)\in Z_{+}(\nu)\subset H
\\&\Rightarrow g_{1}^{-1}  g_2\in H\Rightarrow g_1H=g_2H,
\end{align*}
but it is a contradiction, since $g_1H\cap g_2H=\emptyset$. By analogous computations we have the result for the others cases.
\end{proof}

By Lemma \ref{lema da matriz nula}, we have that the powers of the matrix $\nu(G^{-1}\times G)$ are given by
\begin{align*}
\nu(G^{-1}\times G)^n={\footnotesize\left(
  \begin{array}{cccc}
    \nu(H^{-1}\times H)^n & 0 & \ldots &0 \\
   0 & \nu(H^{-1}\times H)^n & \ldots & 0 \\
    \vdots & \vdots & \vdots & \vdots \\
    0 & \ldots & \ldots & \nu(H^{-1}\times H)^n \\
  \end{array}
\right).}
\end{align*}

\begin{lemma}
The matrix $\nu(H^{-1}\times H)$ is primitive.
\end{lemma}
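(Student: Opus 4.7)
The plan is to verify that some power of $A := \nu(H^{-1} \times H)$ has strictly positive entries, which is the very definition of primitivity; the acyclic hypothesis is tailored exactly for this.

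First I would set up notation. Enumerate $H = \{h_0, h_1, \ldots, h_{|H|-1}\}$ with $h_0 = e$, so that the $(i,j)$ entry of $A$ is $a_{ij} = \nu(h_i^{-1} h_j)$. Observe that $a_{ij} > 0$ if and only if $h_i^{-1} h_j \in \supp(\nu)$, i.e.\ if and only if $h_i^{-1} h_j \in Z_+(\nu)^1$. Moreover, since $H$ is a subgroup containing $\supp(\nu)$, we have $h_i^{-1} h_j \in H$ for every $i,j$, so $A$ is a genuine square block indexed by $H$.

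The key step is to prove by induction on $n$ the characterization
\begin{equation*}
(A^n)_{ij} > 0 \iff h_i^{-1} h_j \in Z_+(\nu)^n.
\end{equation*}
Expanding $(A^n)_{ij} = \sum_{k_1, \ldots, k_{n-1}} a_{i k_1} a_{k_1 k_2} \cdots a_{k_{n-1} j}$, each summand is a product of $n$ terms of the form $\nu(h_{k_{\ell-1}}^{-1} h_{k_\ell})$ whose indices telescope: the product of the corresponding group elements is exactly $h_i^{-1} h_j$. Thus a summand is positive precisely when each factor $h_{k_{\ell-1}}^{-1} h_{k_\ell}$ lies in $\supp(\nu)$, which exhibits $h_i^{-1} h_j$ as an element of $Z_+(\nu)^n$; conversely, any decomposition $h_i^{-1} h_j = s_1 \cdots s_n$ with $s_\ell \in \supp(\nu)$ yields intermediate elements $k_\ell$ inside $H$ (since $s_1 \cdots s_\ell \in H$) that make one summand strictly positive.

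Finally, since $\nu$ is acyclic there exists $N \in \mathbb{N}$ with $Z_+(\nu)^N = H$. Given any $i,j$, we have $h_i^{-1} h_j \in H = Z_+(\nu)^N$, so by the characterization $(A^N)_{ij} > 0$. Hence every entry of $A^N$ is strictly positive and $A$ is primitive.

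The only mildly delicate point is the telescoping argument: one must check that the intermediate indices $k_\ell$ can be chosen inside $H$ rather than in all of $G$, but this is immediate because partial products of elements of $\supp(\nu) \subset H$ remain in $H$. Everything else is bookkeeping.
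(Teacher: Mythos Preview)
Your proof is correct and follows essentially the same route as the paper's: both establish by induction that $(A^n)_{ij}>0$ exactly when $h_i^{-1}h_j\in Z_+(\nu)^n$ (the paper phrases this equivalently as $h_j\in L_{h_i}(Z_+(\nu)^n)$), and then invoke the acyclic hypothesis to conclude that some $A^N$ has all entries positive. Your explicit remark that the intermediate indices $k_\ell$ stay inside $H$ because partial products of elements of $\supp(\nu)\subset H$ remain in $H$ is a nice clarification that the paper leaves implicit.
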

\begin{proof}
Let us consider the matrix $A=(a_{ij})_{i,j}:=(\nu(h_i^{-1}  h_j))_{i,j}$.
Then we notice that
\begin{align*}
a_{ij}>0&\Leftrightarrow h_i^{-1}  h_j\in Z _{+}(\nu)
\\&\Leftrightarrow\exists \bar{h}\in Z _{+}(\nu) \mbox{ such that }h_i^{-1}  h_j= \bar{h}
\\&\Leftrightarrow h_j= h_i \bar{h},\ \bar{h}\in Z _{+}(\nu).
\end{align*}
It implies that $a_{ij}>0$ if and only if $h_j\in L_{h_{i}}(Z _{+}(\nu))$. As $L_{h_{i}}$ is
a bijection, in each line we have $|Z _{+}(\nu)|$ positive coefficients.
Consider now $A^2$, which we will denote by $A^2=(a_{ij}^{2})_{i,j}$. Then we have that
  \begin{align*}
a_{ij}^{2}>0&\Leftrightarrow \sum_{k=0}^{n-1}\nu(h_i^{-1}  h_k)\nu(h_{k}^{-1}  h_j)
\\&\Leftrightarrow\exists k\in \{0,...,n-1\}\mbox{ such that }\nu(h_i^{-1}  h_k)\nu(h_k^{-1}  h_j)>0
\\&\Leftrightarrow \nu(h_i^{-1}  h_k)>0\mbox{ and } \nu(h_k^{-1}  h_j)>0
\\&\Leftrightarrow \exists h^{\prime},h^{\prime\prime}\in Z _{+}(\nu)\mbox{ such that }h_k=h_{i}  h^{\prime}, \ h_j=h_k  h^{\prime\prime}
\\&\Leftrightarrow h_j=h_i  h^{\prime}  h^{\prime\prime}
\\&\Leftrightarrow h_j\in L_{h_i}(Z _{+}(\nu)^{2}).
\end{align*}
Again, we can see that $A^2$ has $|Z _{+}(\nu)^{2}|$  positive coefficients. Following by induction, if
$A^{n}=(a_{ij}^{n})_{i,j}$, then
$$a_{ij}^{n}>0\Leftrightarrow h_j\in L_{h_i}(Z _{+}(\nu)^{n}).$$
As $\nu$ is acyclic we have, from Definition \ref{Acyclic} that there exists $N\in\mathbb{N}$, such that for $n>N$
$$a_{ij}^{n}>0\Leftrightarrow h_j\in L_{h_i}(Z _{+}(\nu)^{n})=h_i H=H.$$
It implies that for $n>N$ the matrix $A^{n}=(a_{ij}^{n})_{i,j}$ has $|H|$ coefficients positive in each line.
As the matrix $A$ has order $|H|$ we see that $A$ is primitive.
\end{proof}

\begin{lemma}\label{permutation lemma}
Let $\mu,\nu\in\mathcal{P}(G)$ and $\sigma$ a permutation on $G$. Then we have that
$$\mu\cdot\nu((\sigma(G))^{-1}\times \sigma(G))=\mu\cdot\nu(G^{-1}\times G).$$
\end{lemma}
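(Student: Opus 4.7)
The plan is to observe that the identity in Lemma \ref{rep. da aplica��o conv} does not depend on which enumeration of $G$ one chooses: its derivation uses only that $g_0,\ldots,g_{n-1}$ list the elements of $G$ and never appeals to the particular ordering. Consequently, the same formula remains valid when $(g_0,\ldots,g_{n-1})$ is replaced by any other enumeration, provided the coordinate vectors of $\mu$ and $\nu$ are rewritten with respect to that enumeration.

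Given a permutation $\sigma$ of $G$, the tuple $(\sigma(g_0),\ldots,\sigma(g_{n-1}))$ is just another enumeration of $G$. Applying Lemma \ref{rep. da aplica��o conv} with respect to it yields
\begin{equation*}
\nu\ast\mu=\mu\cdot\nu(\sigma(G)^{-1}\times \sigma(G)),
\end{equation*}
where the row vector denoted $\mu$ is the coordinate vector of the measure $\mu$ in the new ordering. Combining this with the original application of Lemma \ref{rep. da aplica��o conv},
\begin{equation*}
\nu\ast\mu=\mu\cdot\nu(G^{-1}\times G),
\end{equation*}
and using that $\nu\ast\mu$ is intrinsically defined (so the two left-hand sides coincide as measures on $G$), the two right-hand sides represent the same measure, which is the claimed equality.

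The only real obstacle in writing up the proof is notational clarity: the symbol $\mu$ on the two sides of the claimed equation refers to the coordinate vector of the same measure in two different enumerations of $G$, and one must be explicit about this convention. Once that point is fixed, no calculation beyond the bookkeeping of enumerations is required, and the lemma is a direct corollary of Lemma \ref{rep. da aplica��o conv}. In the sequel this will allow one to reorder $G$ so that the subgroup $H$ generated by $\operatorname{supp}(\nu)$ occupies the first $|H|$ coordinates, bringing $\nu(G^{-1}\times G)$ into the block-diagonal form used in the Remark preceding Lemma \ref{lema da matriz nula} without loss of generality.
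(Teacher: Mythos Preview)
Your argument is correct and is essentially the same as the paper's: both deduce the equality from the fact that the convolution $\nu\ast\mu$ is intrinsically defined, so the matrix representation of Lemma~\ref{rep. da aplica��o conv} yields the same measure regardless of the enumeration of $G$. You make explicit the notational point that the symbol $\mu$ on each side is the coordinate vector in the corresponding enumeration, which the paper leaves implicit, but the underlying reasoning is identical.
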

\begin{proof}
We notice that the convolution does not depend on the order of the group, then
\begin{align*}
\mu\cdot\nu((\sigma(G))^{-1}\times \sigma(G))=\mu\ast\nu
=\mu\ast\nu(G^{-1}\times G).
\end{align*}
\end{proof}

 We would like to observe that, $B$ is also doubly stochastic and always has 1 as an eigenvalue.

\begin{remark} The main fact used in the Lemma \ref{permutation lemma} was the fact that the integral does not change under
permutation of the group $G$.
\end{remark}

Using Lemma \ref{permutation lemma} and Proposition \ref{main theorem}, and making some permutation on the elements one can easily conclude that,
\begin{proposition}\label{the w-limit theorem}
Let $G=\{g_0,...,g_{n-1}\}$ be a finite group, $\nu\in\mathcal{P}(G)$ an acyclic probability and $H=\langle Z _{+}(\nu)\rangle$ . The
matrix $\nu(G^{-1}\times G)=(\nu(g_i^{-1}  g_j))_{i,j}$ satisfies
$\displaystyle \lim_{n\to\infty} \nu(G^{-1}\times G)^{n}=B$, where $B$ is the matrix
given by
$$
b_{ij}=\left\{
         \begin{array}{ll}
           0, & \hbox{ if } g_i^{-1}  g_j\not\in H \\
           \frac{1}{|H|}, & \hbox{ if } g_i^{-1}  g_j\in H
         \end{array}
       \right.
$$
\end{proposition}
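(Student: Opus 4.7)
The strategy is to reduce Proposition \ref{the w-limit theorem} to the already-established Proposition \ref{main theorem} via a relabeling argument. Proposition \ref{main theorem} delivers the result in the special enumeration of $G$ where elements of each left coset of $H$ appear consecutively, giving a block-diagonal limit with blocks $\frac{1}{|H|}J$ on the diagonal. What remains is to transport this picture to an arbitrary enumeration and then identify the entries of the limit matrix in intrinsic, coordinate-free terms.

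Concretely, given the enumeration $g_0,\ldots,g_{n-1}$ of $G$, I would choose a permutation $\sigma$ of $G$ so that $\sigma(g_0),\ldots,\sigma(g_{n-1})$ lists the elements coset-by-coset. In this permuted ordering Proposition \ref{main theorem} applies and produces the block-diagonal limit $B_{0}$, where $(B_{0})_{k\ell}=\tfrac{1}{|H|}$ exactly when $\sigma(g_{k})$ and $\sigma(g_{\ell})$ lie in a common left coset of $H$, and $(B_{0})_{k\ell}=0$ otherwise. Using Lemma \ref{rep. da aplica��o conv} together with Lemma \ref{permutation lemma}, the effect of reindexing $G$ by $\sigma$ on the matrix $\nu(G^{-1}\times G)$ is simply conjugation by the permutation matrix $P_{\sigma}$. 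Because conjugation commutes with taking powers and passing to limits, the limit matrix in the original ordering is $B=P_{\sigma}^{-1}B_{0}P_{\sigma}$.

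To finish, I would read off the entries: $b_{ij}=(B_{0})_{\sigma^{-1}(i),\sigma^{-1}(j)}$, which equals $\tfrac{1}{|H|}$ precisely when the pair $\sigma(\sigma^{-1}(i)),\sigma(\sigma^{-1}(j))=g_{i},g_{j}$ lies in a common left coset of $H$, i.e.\ when $g_{i}^{-1}g_{j}\in H$, and equals $0$ otherwise. This is exactly the claimed formula, and since the coset membership condition depends only on the group elements and not on any ordering, the formula holds for the original arbitrary enumeration.

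The main (and essentially only) obstacle is bookkeeping: making sure the conjugation identity $\nu(\sigma(G)^{-1}\times\sigma(G))=P_{\sigma}\,\nu(G^{-1}\times G)\,P_{\sigma}^{-1}$ is applied with the correct side conventions so that the relabeling of entries matches the block structure of $B_{0}$. Once one observes that $g_{i}^{-1}g_{j}\in H$ is intrinsic, this verification is routine and the proposition follows.
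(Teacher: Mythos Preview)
Your proposal is correct and follows essentially the same route as the paper: the paper's proof consists of the single sentence ``Using Lemma \ref{permutation lemma} and Proposition \ref{main theorem}, and making some permutation on the elements one can easily conclude that\ldots,'' and your argument is precisely the fleshing-out of this reduction, making explicit that relabeling amounts to conjugation by a permutation matrix and that the coset condition $g_i^{-1}g_j\in H$ is intrinsic. The only caveat is some harmless notational slippage between permutations of elements and permutations of indices in your final paragraph, which you yourself flag as routine bookkeeping.
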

Applying Proposition \ref{the w-limit theorem} we get the main result:

\begin{theorem}
Let $G=\{g_0,...,g_{n-1}\}$ be a finite group. If $\nu\in\mathcal{P}(G)$ is an acyclic probability
$H=\langle Z _{+}(\nu)\rangle$ is the subgroup generated by the support of $\nu$, then
$$
\lim_{n\to\infty}\nu^{n}=\sum_{h\in H}\frac{1}{|H|}\delta_h.
$$
\end{theorem}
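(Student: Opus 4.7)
The plan is to reduce the convergence of the measure sequence $\{\nu^n\}$ to the convergence of matrix powers already established in Proposition \ref{the w-limit theorem}, and then perform a direct matrix-times-vector computation using the coset structure of $H$ in $G$.

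First, I would invoke the identity derived after Lemma \ref{rep. da aplica��o conv}, namely
$$\nu^{n+1}=\nu\cdot\nu(G^{-1}\times G)^{n},$$
which identifies the iterates of $T_\nu$ applied to $\nu$ with the action of the $n$th power of the matrix $A:=\nu(G^{-1}\times G)$ on the coordinate row vector $p=(p_0,\dots,p_{n-1})$ associated with $\nu$. Since all the objects live in the finite-dimensional space $\mathbb{R}^{n}$, the mapping $p\mapsto p\cdot A^{n}$ is continuous in the matrix entry, so convergence $A^{n}\to B$ guaranteed by Proposition \ref{the w-limit theorem} immediately yields
$$\lim_{n\to\infty}\nu^{n}=\nu\cdot B.$$

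Next, I would read off the $j$th coordinate of $\nu\cdot B$ from the explicit description of $B$: using $b_{ij}=1/|H|$ when $g_i^{-1}g_j\in H$ and $b_{ij}=0$ otherwise, the $j$th entry equals
$$\sum_{i\,:\,g_i^{-1}g_j\in H}\frac{p_i}{|H|}.$$
The key observation (and really the only computational step) is that $\supp(\nu)\subset H$, so $p_i>0$ forces $g_i\in H$; combined with the condition $g_i^{-1}g_j\in H$, this forces $g_j=g_i(g_i^{-1}g_j)\in H$. Conversely, when $g_j\in H$, every $g_i$ with $p_i>0$ automatically satisfies $g_i^{-1}g_j\in H$, so the sum collapses to $\frac{1}{|H|}\sum_{i}p_i=\frac{1}{|H|}$. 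Hence the $j$th coordinate of the limit equals $\frac{1}{|H|}$ if $g_j\in H$ and $0$ otherwise.

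Translating back to measures gives exactly
$$\lim_{n\to\infty}\nu^{n}=\sum_{h\in H}\frac{1}{|H|}\delta_{h},$$
as claimed. I do not expect a serious obstacle here: the work is already concentrated in Proposition \ref{the w-limit theorem}; the only subtlety is bookkeeping the compatibility between $\supp(\nu)\subset H$ and the block-diagonal structure of $B$ to conclude that the limit has mass only on the single coset $H$ and is uniform there.
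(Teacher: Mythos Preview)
Your proposal is correct and follows essentially the same route as the paper: both derive the theorem directly from Proposition~\ref{the w-limit theorem} by passing to the limit $\nu\cdot B$, and your explicit computation of the $j$th coordinate of $\nu\cdot B$ via the inclusion $\supp(\nu)\subset H$ is exactly the step the paper leaves implicit in the phrase ``Applying Proposition~\ref{the w-limit theorem} we get the main result.''
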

\begin{example} \label{example subgroupZ+} Take $\bar{G}$ a finite abelian group  and $a, b \in \bar{G}$ such that $a^2=e$, $b^3=e$ and $\nu=p=\alpha \delta_e + (1-\alpha) \delta_b$, $0<\alpha<1$  and $G=\{e, a, b^2, ab, b,ab^2\}$.
So we have
$$
\nu(G^{-1}\times G)=\left(
                      \begin{array}{cccccc}
                        \alpha & 0 & 0 & 0 & (1-\alpha) & 0 \\
                        0 & \alpha & 0 & (1-\alpha) & 0 & 0 \\
                        (1-\alpha) & 0& \alpha & 0 & 0 & 0 \\
                        0 & 0 & 0 & \alpha & 0 & (1-\alpha) \\
                        0 & 0 & (1-\alpha) & 0 & \alpha & 0 \\
                        0 & (1-\alpha) & 0 & 0 & 0 & \alpha \\
                      \end{array}
                    \right).
$$
 In that case $Z_{+}(\nu)=\{e,b\}$ and $\langle Z_{+}(\nu)\rangle=\{e,b,b^2\}$, and by Theorem \ref{the w-limit theorem} we have that
$$\lim_{n\to\infty}\nu(G^{-1}\times G)^{n}= \left(
                               \begin{array}{cccccc}
                                 \frac{1}{3} & 0 & \frac{1}{3} & 0 & \frac{1}{3} & 0 \\
                                 0 & \frac{1}{3} & 0 & \frac{1}{3} & 0 & \frac{1}{3} \\
                                 \frac{1}{3} & 0 & \frac{1}{3} & 0 & \frac{1}{3} & 0 \\
                                 0 & \frac{1}{3} & 0 & \frac{1}{3} & 0 & \frac{1}{3} \\
                                 \frac{1}{3} & 0 & \frac{1}{3} & 0 & \frac{1}{3} & 0 \\
                                 0 & \frac{1}{3} & 0 & \frac{1}{3} & 0 & \frac{1}{3} \\
                               \end{array}
                             \right).
$$
Then we have that $\displaystyle\lim_{n\to\infty}\nu^n=\frac{1}{3}(\delta_e+\delta_b+\delta_{g^2})$.
\end{example}
\begin{remark}
If the probability $\nu\in\mathcal{P}(G)$ is not acyclic,
then there exists a finite number of subsets of $\langle Z_{+}(\nu)\rangle$,
let us say $K_1,...,K_l$ such that for each $n\in\mathbb{N}$, $Z_{+}(\nu)^n=K_i$
for some $i\in\{1,...,l\}$. Following the same computations
made to get Theorem \ref{main theorem}, is possible to show that the sequence
$\{\nu^n\}_{n\in\mathbb{N}}$ has $l$ accumulation points and each of these accumulations
points is a uniform probability measure supported on a set $K_j$.
\end{remark}

\begin{remark}
Let $G_1,G_2$ finite groups and $\phi:G_1\to G_2$
a homomorphism of groups. It is easy to see that the
push forward map
$\phi_{\sharp}:\mathcal{P}(G_1)\to \mathcal{P}(G_2)$
given by $\phi_{\sharp}(\mu)(A)=\mu(\phi^{-1}(A))$ for all
$A\subset G_2$,
satisfies the following
$$
\phi_{\sharp}(\nu\ast\mu)=\phi_{\sharp}(\nu)\ast\phi_{\sharp}(\mu).
$$
It implies that $\lim_{n\to\infty}\phi_{\sharp}(\nu^n)=\lim_{n\to\infty}(\phi_{\sharp}(\nu))^n$.
\end{remark}

The next proposition guarantee the density of the set of acyclic probability.
It shows how big is the set of acyclic probabilities in the sense of the topology
of  $\mathcal{P}(G)$.

\begin{proposition}
Let $\nu_0\in\mathcal{P}(G)$, where $G$ is a finite group. Given $\varepsilon>0$, there exists $\bar{\nu}\in\mathcal{P}(G)$
such that $\bar{\nu}$ is a acyclic and $d(\bar{\nu},\nu_0)<\varepsilon$, i.e., the set of acyclic probabilities
is  dense in $\mathcal{P}(G)$.
\end{proposition}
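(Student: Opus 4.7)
The plan is to approximate $\nu_0$ by a small convex perturbation that places some positive mass at the identity $e\in G$. Concretely, for a parameter $\delta\in(0,1)$ to be chosen below, I will set
$$
\bar{\nu}\;=\;(1-\delta)\,\nu_0\;+\;\delta\,\delta_e.
$$
Since adjoining the identity to a generating set does not enlarge the subgroup it generates, one has $\langle\mbox{supp}(\bar{\nu})\rangle=\langle\mbox{supp}(\nu_0)\cup\{e\}\rangle=\langle\mbox{supp}(\nu_0)\rangle=:H$, and crucially $e\in\mbox{supp}(\bar{\nu})$.

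The key step is to verify that any such $\bar{\nu}$ is automatically acyclic. Because $e\in\mbox{supp}(\bar{\nu})$, prepending $e$ to any positive word of length $m$ produces a positive word of length $m+1$ with the same value, so
$$
Z_{+}(\bar{\nu})^{m}\;\subseteq\;Z_{+}(\bar{\nu})^{m+1}\;\subseteq\;H\qquad\text{for all }m\geq 1.
$$
Since $H$ is finite, this monotone chain must stabilize at some index $N$. On the other hand, $G$ being finite forces every inverse $g^{-1}$ to equal a positive power $g^{|g|-1}$, so every element of $H$ can be written as a positive word in $\mbox{supp}(\bar{\nu})$; consequently $\bigcup_{m}Z_{+}(\bar{\nu})^{m}=H$. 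Combining these two facts gives $Z_{+}(\bar{\nu})^{N}=H$, which is exactly the condition of Definition \ref{Acyclic}.

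For the distance estimate, identifying $\mathcal{P}(G)$ with the simplex $\Delta_n\subset\mathbb{R}^n$, the coordinate vector of $\bar{\nu}$ differs from that of $\nu_0$ by the vector $\delta\,(\delta_e-\nu_0)$, so $d(\bar{\nu},\nu_0)\leq C\,\delta$ for a constant $C$ depending only on the chosen norm on $\mathbb{R}^n$ and on $d(\delta_e,\nu_0)$. Choosing $\delta<\varepsilon/C$ then yields $d(\bar{\nu},\nu_0)<\varepsilon$.

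The only potential obstacle is the chain-monotonicity $Z_{+}(\bar{\nu})^{m}\subseteq Z_{+}(\bar{\nu})^{m+1}$, which genuinely requires $e\in\mbox{supp}(\bar{\nu})$; without it the chain need not even be monotone, as illustrated by the first example in the paper where $\nu=\delta_g$ with $g^{2}=e$ fails to be acyclic. Once the identity has been inserted with any positive weight, the argument reduces to the elementary observations above.
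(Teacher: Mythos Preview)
Your proof is correct, and it takes a genuinely different route from the paper's. The paper perturbs $\nu_0$ by redistributing a small amount of mass uniformly over all of $H=\langle\mbox{supp}(\nu_0)\rangle$, so that $\mbox{supp}(\bar{\nu})=H$ and acyclicity holds trivially with $N=1$; the authors do not spell out this last step, and their formula also has an undiscussed edge case when $\mbox{supp}(\nu_0)=H$ already (the denominator $k-|Z_{+}(p)|$ vanishes). Your construction is leaner: you add mass only at the identity via the convex combination $(1-\delta)\nu_0+\delta\,\delta_e$, and then give an explicit argument that $e\in\mbox{supp}(\bar{\nu})$ forces the chain $Z_{+}(\bar{\nu})^{m}$ to be monotone and hence to stabilize at $H$. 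The trade-off is that the paper's perturbation makes acyclicity immediate but touches many coordinates, while yours touches a single coordinate but requires the monotonicity-plus-finiteness argument; your version has the advantage of being self-contained and free of special cases.
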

\begin{proof}
Let $\varepsilon>0$ and $\nu_0=p=\displaystyle\sum_{i=0}^{k-1}p_i\delta_{h_i}$ with
$$Z_{+}(p)=\{g\in G:\nu(g)>0\} \mbox{ and } H=\langle Z_{+}(p)\rangle=\{h_0,...,h_{k-1}\}.$$
 Then we define $a=\min\{p_i:p_i>0\}$ and  $\displaystyle\bar{\varepsilon}=\frac{1}{2}\min\{\varepsilon,a \}$.
So we consider the measure $\displaystyle\bar{\nu}=\bar{p}=\sum_{i=0}^{k-1}\bar{p}_i\delta_{h_i}$, where
$$
\displaystyle\bar{p}_i=\left\{
  \begin{array}{ll}
   \frac{ \bar{\varepsilon}}{k-|Z_{+}(p)|}, & \hbox{ if }p_i=0 \\
    p_i-\frac{ \bar{\varepsilon}}{|Z_{+}(p)|}, & \hbox{ if } p_i>0.
  \end{array}
\right.
$$
Obviously $\bar{\nu}\in\mathcal{P}(G)$ and as
$$
d(\nu_0,\bar{\nu})=\sum_i|p_i-\bar{p}_i|=\sum_{p_i=0}\frac{ \bar{\varepsilon}}{k-|Z_{+}(p)|}+\sum_{p_i>0}\frac{ \bar{\varepsilon}}{|Z_{+}(p)|}=2\bar{\varepsilon}<\varepsilon,
$$
so we get the result.
\end{proof}

\section{Application: Dynamics of $T_{\nu}$}

We start this section with the basic properties of $T_{\nu}$ .

\begin{proposition}
The map $T_{\nu}(\mu)= \mu*\nu$ is continuous in the weak topology, linear and its fixed points satisfy the Choquet-Deny equation, $\mu*\nu=\mu$.
\end{proposition}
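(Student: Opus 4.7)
The plan is to verify the three assertions separately; none is deep, and the whole proposition reduces to elementary properties of convolution together with the finiteness of $G$.

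For linearity, I would argue directly from the integral definition of convolution. For $\mu_1,\mu_2\in\mathcal{P}(G)$, scalars $a,b\in\mathbb{R}$, and any test function $f\in C(G,\mathbb{R})$, the expression
\[
T_{\nu}(a\mu_1+b\mu_2)(f)=\int_G\int_G f(gh)\,d(a\mu_1+b\mu_2)(g)\,d\nu(h)
\]
splits immediately by linearity of the integral in the measure variable, yielding $a\,T_{\nu}(\mu_1)(f)+b\,T_{\nu}(\mu_2)(f)$. Equivalently, the bilinear representation lemma proved earlier exhibits convolution as bilinear in the coefficient vectors of the two measures, which gives the same conclusion.

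For weak continuity, the cleanest route is to invoke finiteness of $G$: the space of signed measures is canonically isomorphic to $\mathbb{R}^n$, and on this finite-dimensional space the weak topology coincides with the Euclidean one, as one sees by testing against the indicator functions $\mathbf{1}_{\{g_i\}}$ (which recovers each coordinate of the representing vector). Since every linear endomorphism of a finite-dimensional normed space is automatically continuous, continuity of $T_{\nu}$ follows at once from the linearity just established. Alternatively, one may read continuity straight off the matrix representation $T_{\nu}(\mu)=\mu\cdot\nu(G^{-1}\times G)$ of the earlier lemma, since multiplication by a fixed matrix is a continuous operation on $\mathbb{R}^n$.

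The Choquet--Deny part is essentially tautological: by the very definition of a fixed point, $T_{\nu}(\mu)=\mu$ unfolds to $\mu*\nu=\mu$. I do not expect any genuine obstacle in this proposition; the only remark worth making is that the continuity statement is this transparent precisely because $G$ is finite, so the topology question collapses to finite-dimensional linear algebra. A general compact topological group would require a separate, more delicate argument showing that the functional $\mu\mapsto\int f\,d(\mu*\nu)$ is weakly continuous for every $f\in C(G)$.
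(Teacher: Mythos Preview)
Your argument is correct and, in fact, more explicit than what the paper does. The paper does not supply a proof at all: it simply remarks that these facts are long known and refers the reader to the literature (Bobrowski, Choquet--Deny, Deny), then moves on to the asymptotic analysis of $T_{\nu}$. Your direct verification---linearity from the bilinearity of the convolution pairing, continuity from the finite-dimensional matrix representation $T_{\nu}(\mu)=\mu\cdot\nu(G^{-1}\times G)$, and the fixed-point statement as a tautology---is the natural self-contained route in the finite setting and fits well with the machinery already developed in the paper. The only minor cosmetic point is that $T_{\nu}$ is really affine when restricted to $\mathcal{P}(G)$ (convex combinations rather than arbitrary linear ones stay in the simplex), but since the map extends linearly to all of $C(G,\mathbb{R})'\simeq\mathbb{R}^n$ your phrasing is fine.
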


This claims are long time knowledged from the literature (see \cite{Bob} pg 73,  \cite{CD} and \cite{D}), thus what remains is to understand the asymptotic behavior of $T_{\nu}$.

\begin{theorem}\label{teoremdynlim} Let $G=\{g_0,...,g_{n-1}\}$ be a finite group. If $\nu\in\mathcal{P}(G)$ is an acyclic probability, $H=\langle Z _{+}(\nu)\rangle$ is the subgroup generated by the support of $\nu$, the
$w-$limit set, here denoted by $L_{\omega}(\mu)$, that is the set of accumulation point of his orbit, is
$$L_{\omega}(\mu)= \sum_{h\in H}\frac{1}{|H|}\delta_h   * \mu,$$
  linear on $\mu$. Moreover, $\mu$ is a recurrent point of the dynamics, that is, $\mu \in L_{\omega}(\mu)$, only if $\mu$ is solution of the Choquet-Deny equation
$$\bar{\nu}*\mu=\mu,$$
where $\displaystyle \bar{\nu}=\lim_{n\to\infty}\nu^{n}$.
\end{theorem}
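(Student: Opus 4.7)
The plan is to reduce everything to the convergence $\nu^{n}\to\bar\nu$ already established in Proposition \ref{the w-limit theorem} (restated in the main Theorem) together with the continuity of the convolution. There is essentially no new content to produce; the theorem is a dynamical rephrasing of what has already been proved.

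First I would observe that an easy induction using associativity of $*$ gives $T_\nu^{\,n}(\mu)=\nu^{n}*\mu$, so that the forward orbit of $\mu$ under $T_\nu$ is exactly the sequence $\{\nu^{n}*\mu\}_{n\in\mathbb{N}}$, and by definition $L_\omega(\mu)$ is its set of accumulation points.

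Next, since $G$ is finite, $\mathcal{P}(G)$ sits inside the finite-dimensional space of signed measures on $G$, and convolution $(\alpha,\beta)\mapsto \alpha*\beta$ is bilinear and hence continuous; this is already visible in the matrix description of Lemma \ref{rep. da aplica��o conv}. For fixed $\mu$ the right-convolution $\alpha\mapsto \alpha*\mu$ is therefore a continuous linear map, so I can pass to the limit and obtain
$$
\lim_{n\to\infty}\nu^{n}*\mu
=\Bigl(\lim_{n\to\infty}\nu^{n}\Bigr)*\mu
=\bar\nu*\mu
=\sum_{h\in H}\frac{1}{|H|}\delta_h*\mu,
$$
where the inner limit is the main Theorem. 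Because the orbit converges, its set of accumulation points collapses to the singleton $\{\bar\nu*\mu\}$, which is the claimed formula; the statement that $L_\omega$ depends linearly on $\mu$ is just a rewording of the bilinearity of $*$ in its second argument.

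For the recurrence/Choquet--Deny part, the assumption $\mu\in L_\omega(\mu)$ now forces $\mu=\bar\nu*\mu$, which is precisely the Choquet--Deny equation with respect to $\bar\nu$. The only issue requiring care in the write-up is to keep the side of the convolution consistent with the convention $T(\mu)=\nu*\mu$ from the introduction, so that iteration produces $\nu^{n}*\mu$ (and therefore $\bar\nu*\mu$ in the limit) rather than $\mu*\bar\nu$; beyond this, the proof has no genuine obstacle.
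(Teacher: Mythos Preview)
Your proposal is correct and matches the paper's own treatment: the paper states Theorem~\ref{teoremdynlim} without a separate proof, treating it as an immediate dynamical rephrasing of the main convergence result $\nu^{n}\to\bar\nu$ together with the matrix/bilinear description of convolution from Lemma~\ref{rep. da aplica��o conv}. Your observation about keeping the side of the convolution consistent (the paper uses $T(\mu)=\nu*\mu$ in the introduction but $T_\nu(\mu)=\mu*\nu$ in Section~3) is apt and worth flagging, but does not affect the argument.
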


\begin{example} \label{exdyn}

We consider $G=(\mathbb{Z}_{3}, +)$ and $\nu= (1/3,1/4,5/12)$. So
$$
G^{-1} \times G= \left[
  \begin{array}{ccc}
    0 & 1 & 2 \\
    2 & 0 & 1 \\
    1 & 2 & 0 \\
  \end{array}
\right]\ \mbox{ and }
\nu(G^{-1} \times G)=
\left[
  \begin{array}{ccc}
    1/3 & 1/4 & 5/12 \\
    5/12 & 1/3  & 1/4 \\
    1/4 & 5/12 & 1/3  \\
  \end{array}
\right]
.$$

To find the fixed points for $T_{\nu}$ we need to solve the following equation,
$$\left[
  \begin{array}{ccc}
    q_{0}  & q_{1} & q_{2}\\
  \end{array}
\right] =
\left[
  \begin{array}{ccc}
    q_{0}  & q_{1} & q_{2}\\
  \end{array}
\right] \cdot
 \left[
  \begin{array}{ccc}
    1/3 & 1/4 & 5/12 \\
    5/12 & 1/3  & 1/4 \\
    1/4 & 5/12 & 1/3  \\
  \end{array}
\right].$$
By linear algebra we have that there is only one solution for the above
equation and it is given by $\displaystyle\mu_0=\frac{1}{3}(\delta_0+\delta_1+\delta_2)$.
So the unique fixed point is $\mu_0$.

We also have that $\mu$ is recurrent only if $\displaystyle\lim_{n\to\infty}\nu^{n}\ast\mu=\mu$.
But $\displaystyle\lim_{n\to\infty}\nu^{n}=\mu_0$, and it implies that $\mu$ is recurrent only if
$$
\left[
  \begin{array}{ccc}
    q_{0}  & q_{1} & q_{2}\\
  \end{array}
\right] =
\left[
  \begin{array}{ccc}
    q_{0}  & q_{1} & q_{2}\\
  \end{array}
\right] \cdot
 \left[
  \begin{array}{ccc}
    1/3 & 1/3 & 1/3 \\
    1/3 & 1/3  & 1/3 \\
    1/3 & 1/3 & 1/3  \\
  \end{array}
\right],
$$
and solving this equation the unique possibility is $\mu=\mu_0$.
\end{example}

Using Theorem \ref{main theorem}, we will try to find conditions
for two measures have the same $\omega-$limit, where $\nu=\sum_{i}p_i\delta_{g_{i}}$ is a acyclic. First we observe that
if $\mu=\sum_{i}q_i\delta_{g_{i}}\in\mathcal{P}(G)$, then $\omega(\mu)=\{\mu\cdot B\}$. If we identify $\mu$ with
the vector $q=\sum_{i}q_ie_i$ in $\mathbb{R}^{n}$, where $\{e_i\}_{0\leq i\leq n-1}$ is the canonical basis of $\mathbb{R}^{n}$, we have that
\begin{align*}
q\cdot B=\Big(\sum_{i}q_ie_i\Big)\cdot B=\sum_{i}q_i\Big(e_i\cdot B\Big).
\end{align*}
It implies that $L_{\omega}(\mu)=\sum_{i}q_i\omega(\delta_{g_i})$. So, to determine the $\omega-$limit of a measure it is
enough to determine the $\omega-$limit of the measures $\delta_{g_i}$, for all $g_i\in G$. Then we notice that if
$H=\langle Z_{+}(p)\rangle$, $|H|=k$, $|G|=|H|l$, $\bar{\mu}=(q_0,...,q_{n-1})$, $\mu=\delta_{g_{0}}$, and if we write $$\alpha_0=\displaystyle\sum_{i=0}^{k-1}q_i,\
 \alpha_1=\displaystyle\sum_{i=k}^{2k-1}q_i,..., \ \displaystyle\alpha_l=\sum_{i=n-k-1}^{n-1}q_i$$
\begin{align*}
 \mu\cdot B&=\bar{\mu}\cdot B
\\&\Leftrightarrow
\Big(\underbrace{\frac{1}{k},\frac{1}{k},...,\frac{1}{k}}_{k},0,...,0\Big)=
\Big(\underbrace{\frac{1}{k}\alpha_0,...,\frac{1}{k}\alpha_0}_{k},\underbrace{\frac{1}{k}\alpha_1,...,
\frac{1}{k}\alpha_1}_{k},...,\underbrace{\frac{1}{k}\alpha_l,...,\frac{1}{k}\alpha_l}_{k}\Big)
\\&\Leftrightarrow
    \displaystyle\sum_{i=0}^{k-1}q_i=1 , \
    \displaystyle\sum_{i=k}^{2k-1}q_i =0,\
   ...., \
\sum_{i=n-k-1}^{n-1}q_i=0.
  \end{align*}
It implies that $L_{\omega}(\delta_{g_0})=L_{\omega}(\mu)$, if and only if $\displaystyle\sum_{i=0}^{k-1}q_i=1$, where $\mu=(1,0,...,0)$.
By the same argument used above we can see that
 \begin{align*}
&L_{\omega}(\delta_{g_i})=L_{\omega}(\delta_{g_0}) \mbox{ for } 0\leq i \leq k-1, \
L_{\omega}(\delta_{g_i})=L_{\omega}(\delta_{g_k}) \mbox{ for } k\leq i \leq 2k-1,...,
\\&L_{\omega}(\delta_{g_i})=L_{\omega}(\delta_{g_{n-k-1}}) \mbox{ for } n-k-1\leq i \leq n-1,
\end{align*}
and from it follows that $\displaystyle L_{\omega}(\mu)=\sum_{i}q_iL_{\omega}(\delta_{g_i})=\sum_{j=0}^{l}\alpha_{j}L_{\omega}(\delta_{g_{jk}})$,
and if $\bar{\mu}=(q_0,...,q_{n-1})$ and we take $\mu=\delta_{g_i}$, with $mk\leq i\leq mk-1$,
$$
L_{\omega}(\bar{\mu})=L_{\omega}(\delta_{g_i})\Leftrightarrow\displaystyle\alpha_{m}=\sum_{i=mk}^{mk-1}q_i=1,
\mbox{ and } \alpha_j=0 \mbox{ for }j\not=m.
$$
Finally, given $\mu=(q_0,...,q_{n-1})$ and $\mu^{\prime}=(q_{0}^{\prime},...,q_{n-1}^{\prime})$,
$$L_{\omega}(\mu)=L_{\omega}(\mu^{\prime})\Leftrightarrow \displaystyle\sum_{i=0}^{k-1}q_i=\displaystyle\sum_{i=0}^{k-1}q_i^{\prime},\
 \displaystyle\sum_{i=k}^{2k-1}q_i=\displaystyle\sum_{i=k}^{2k-1}q_i^{\prime},..., \ \displaystyle\sum_{i=n-k-1}^{n-1}q_i=\sum_{i=n-k-1}^{n-1}q_i^{\prime}.
$$

\begin{definition}
Let $\nu\in\mathcal{P}(G)$ a acyclic probability measure and $\eta\in\mathcal{P}(G)$. We call the basin of $\eta$ the set
$$\{\mu\in\mathcal{P}(G):\lim_{n\to\infty}T_{\nu}^{n}(\mu)=\eta\}.$$
\end{definition}
\begin{example}
Let's go back to the Example \ref{example subgroupZ+} where $G=\{e, a, b^2, ab, b,ab^2\}$, in that particular situation, $\nu=p=\alpha \delta_e + (1-\alpha) \delta_b$, $0<\alpha<1$ and we can rewrite $G$ as $G=\{e,b , b^2, a, ab, ,ab^2\}$ as in Lemma \ref{lema da matriz nula}.

Then , given $\mu=(q_0,...,q_{5})$ and $\mu^{\prime}=(q_{0}^{\prime},...,q_{5}^{\prime})$, we have
$$L_{\omega}(\mu)=L_{\omega}(\mu^{\prime})\Leftrightarrow \displaystyle\sum_{i=0}^{2}q_i=\displaystyle\sum_{i=0}^{2}q_i^{\prime}\text{, and  } \displaystyle\sum_{i=3}^{5}q_i=\sum_{i=3}^{5}q_i^{\prime}.
$$

For instance, if $\mu^{\prime}=(\frac{1}{4}, \frac{1}{2}, 0, \frac{1}{8}, 0, \frac{1}{8})$ we have
\begin{align*}
 \lim_{n\to\infty}T_{\nu}^{n}(\mu^{\prime}) &=\frac{1}{3}\Big(q_0^{\prime}+q_1^{\prime}+q_2^{\prime}, ...,q_0^{\prime}+q_1^{\prime}+q_2^{\prime}, q_3^{\prime}+q_4^{\prime}+q_5^{\prime}, ..., q_3^{\prime}+q_4^{\prime}+q_5^{\prime} \Big) \\
 &= \left(\frac{1}{4},\frac{1}{4},\frac{1}{4}, \frac{1}{12}, \frac{1}{12},\frac{1}{12}\right)\\
 &=\omega(\mu^{\prime})= \eta.
\end{align*}

So, the basin of attraction of $\eta=\left(\frac{1}{4},\frac{1}{4},\frac{1}{4}, \frac{1}{12}, \frac{1}{12},\frac{1}{12}\right)$, that is, $$\{\mu=(q_0,...,q_{5}) \; | \; \lim_{n\to\infty}T_{\nu}^{n}(\mu)=  \eta\}$$ is given by
\[\left\{
  \begin{array}{ll}
    q_0 + q_1 + q_{2} &= \frac{3}{4} \\
    q_3 + q_4 + q_{5} &= \frac{1}{4}\\
    q_0, ... , q_{5} & \in [0,1]
  \end{array}
\right.
\]
that  is a convex region of hyperplane in $\mathbb{R}^{6}$ of dimension 4, more precisely
\[
\left\{
  \begin{array}{l}
    q_0 =  \frac{3}{4} - a - b\\
    q_1 = a ,  q_{2}= b \\
    q_3=\frac{1}{4} - c - d\\
    q_4= c,  q_{5}= d\\
    a+ b \leq \frac{3}{4}, \; c+d \leq \frac{1}{4} \\
    a, b, c, d  \in [0,1]\\
  \end{array}
\right.
\]
is the basin of attraction of $\eta=\left(\frac{1}{4},\frac{1}{4},\frac{1}{4}, \frac{1}{12}, \frac{1}{12},\frac{1}{12}\right)$.
\end{example}
Actually the next theorem shows the this always happens.
\begin{proposition}
 Let $\nu=p\in\mathcal{P}(G)$ a acyclic
 probability measure and $H=\langle Z_+(p)\rangle$, with $|H|=k$ and $|G|=|H|l$. Given $\eta\in\mathcal{P}(G)$
 with
 $$\eta=(\underbrace{q_0,...,q_0}_k,\underbrace{q_1,...,q_1}_k,...,\underbrace{q_{l-1},...,q_{l-1}}_k)$$
  The basin of $\eta$ is a convex subset of a hyperplane of dimension $\frac{n(k-1)}{k}$ in $\mathbb{R}^{n}$.
\end{proposition}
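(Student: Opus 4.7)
The plan is to combine Proposition \ref{the w-limit theorem}, which describes the limit matrix $B$, with the block-sum characterization of $L_{\omega}$ worked out immediately before the statement. Recall that $B$ is block-constant along the cosets of $H$: for any $\mu=(q_0,\ldots,q_{n-1})\in\mathcal{P}(G)$, the $j$-th entry of $\mu\cdot B$ equals $\frac{1}{k}$ times the sum of the coordinates of $\mu$ over whichever coset of $H$ contains $g_j$. Consequently $\lim_{m\to\infty}T_{\nu}^{m}(\mu)=\eta$ if and only if the cosetwise sums of $\mu$ agree with the corresponding cosetwise sums of $\eta$.

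First, using Lemma \ref{permutation lemma} I would reorder the elements of $G$ so that $H$ occupies positions $0,\ldots,k-1$, then $g_1H$ occupies positions $k,\ldots,2k-1$, and so on down to $g_{l-1}H$. In these coordinates, and in view of the particular shape of $\eta$ in the statement, the characterization above becomes the system of $l$ linear equations
\begin{equation*}
\sum_{i=jk}^{(j+1)k-1} q_i \;=\; k\,q_j, \qquad j=0,1,\ldots,l-1,
\end{equation*}
together with the simplex constraints $q_i\in[0,1]$.

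Second, these $l$ equations are linearly independent: each involves a distinct block of $k$ coordinates with coefficient $1$, so the coefficient matrix is block-diagonal with nonzero rows. Hence the affine subspace they cut out in $\mathbb{R}^{n}$ has dimension $n-l=n-n/k=\frac{n(k-1)}{k}$. Summing the $l$ equations yields $\sum_i q_i=k\sum_j q_j=1$ automatically (since $\eta\in\mathcal{P}(G)$), so the usual probability normalization is already implied and does not further reduce the dimension.

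Third, the basin of $\eta$ is the intersection of this affine subspace with $\Delta_n$, and is therefore convex as an intersection of two convex sets. Since $\eta$ itself belongs to the intersection and admits small perturbations within each block that preserve both the block sums and positivity, this intersection has full dimension $\frac{n(k-1)}{k}$. The one step worth scrutinizing is the cosetwise equivalence for $L_{\omega}$, which is already furnished by the paragraph preceding the statement; once that is in hand, the rest is a direct count of linearly independent linear constraints together with a convexity observation.
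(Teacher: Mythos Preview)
Your proof is correct and follows essentially the same approach as the paper: characterize the basin of $\eta$ by the $l$ block-sum equations coming from the limit matrix $B$, observe that these are linearly independent so the affine solution set has dimension $n-l=\frac{n(k-1)}{k}$, and then intersect with the simplex $\Delta_n$. The only cosmetic difference is that the paper derives convexity directly from the linearity of $T_\nu$ rather than from the intersection-of-convex-sets description, and it does not include your additional observations that the normalization $\sum_i q_i=1$ is automatic and that the intersection is full-dimensional.
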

\begin{proof}
To prove the convexity of the basin of a given $\eta$ we only need to notice
that if $\mu_1,\mu_2\in\mathcal{P}(G)$ and $0\leq\alpha\leq1$ , then
\begin{align*}
T_{\nu}(\alpha \mu_1+(1-\alpha)\mu_2)&=(\alpha \mu_1+(1-\alpha)\mu_2)\cdot\nu(G^{-1}\times G)
\\&= \alpha\mu\cdot\nu(G^{-1}\times G)+(1-\alpha)\mu_2\cdot\nu(G^{-1}\times G).
\end{align*}
Hence, if $\mu_1,\mu_2$ are in the basin of $\eta$, the
\begin{align*}
\lim_{n\to\infty}T^{n}_{\nu}(\alpha \mu_1+(1-\alpha)\mu_2)&=\alpha \lim_{n\to\infty}T^{n}_{\nu}(\mu_1)+(1-\alpha)\lim_{n\to\infty}T^{n}_{\nu}(\mu_2)
\\&=\alpha\eta+(1-\alpha)\eta=\eta.
\end{align*}
To  prove the second part of the theorem we notice that if
$\mu=(q^{\prime}_0,q^{\prime}_1,...,q^{\prime}_{n-1})$ is in the basin of $\eta$, then
$\lim_{n\to\infty}T^{n}_{\nu}(\mu)=\eta$ if and only if
\begin{equation*}
\left\{
                                           \begin{array}{ll}
                                             \displaystyle\frac{1}{k}\sum_{i=0}^{k-1}q_i^{\prime}=q_0 \\
                                             \ \ \ \vdots \\
                                             \displaystyle\frac{1}{k}\sum_{i=n-k-1}^{n-1}q_i^{\prime}=q_{l-1}\\
q_0,...,q_{n-1}\in[0,1].
                                           \end{array}
                                         \right.
\end{equation*}
If we forget the restriction $\displaystyle\sum_{i=0}^{n-1}q_i^{\prime}=1$
and $q_0,...,q_{n-1}\in[0,1]$, we have a linear system
which has $n$ variables and $l$ linearly independents equations.
Then its space of solution is given by an hyperplane of dimension
$$n-l=n-\frac{n}{k}=\frac{n(k-1)}{k}.$$
Then the solution of system above is a convex set given by the intersection of a hyperplane
of dimension $\frac{n(k-1)}{k}$
 with the simplex
$\Delta_n=\{(x_0,...,x_{n-1}):\sum_i x_i=1, \ x_i\in[0,1] \}$.
\end{proof}

Thus we have a complete characterization of the limit set of this dynamics, but the generical behaviour of this dynamical systems is given by Example~\ref{exdyn}. Indeed we can prove the following:
\begin{theorem}\label{genericbeh} There is an open and dense set $\mathcal{O} \subset \mathcal{P}(G)$ such that, for all $\nu\in \mathcal{O}$,
$$L_{\omega}(\mu)= \{\nu_0 * \mu\}, \forall \mu\in\mathcal{P}(G),$$
where $\displaystyle \nu_0=\frac{1}{|G|}\sum_{i=0}^{|G|-1}\delta_{g_i}$ which is the unique fixed point of $T_{\nu}$.
\end{theorem}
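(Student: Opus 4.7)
The plan is to take $\mathcal{O}$ to be the interior of the simplex, namely the set of probabilities with full support:
$$\mathcal{O}=\{\nu\in\mathcal{P}(G):\mathrm{supp}(\nu)=G\}.$$
The proof then proceeds in four short steps.

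First, I would verify that $\mathcal{O}$ is open and dense. Openness is immediate because, under the identification $\nu\simeq(p_0,\ldots,p_{n-1})\in\Delta_n$, $\mathcal{O}$ corresponds to the relative interior $\{p\in\Delta_n:p_i>0\text{ for all }i\}$. For density, given any $\nu_0'\in\mathcal{P}(G)$ and $\varepsilon>0$, the convex combination $(1-\varepsilon)\nu_0'+\varepsilon\nu_0$ lies in $\mathcal{O}$ and is $O(\varepsilon)$-close to $\nu_0'$; alternatively one may invoke the density proposition already proved in the previous section and shrink slightly.

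Second, for any $\nu\in\mathcal{O}$ we have $Z_+(\nu)=\mathrm{supp}(\nu)=G$, so in particular $Z_+(\nu)^1=G$ and the subgroup generated by the support is $H=G$. Thus $\nu$ is trivially acyclic with $H=G$, and the main Theorem~\ref{main theorem} together with Proposition~\ref{the w-limit theorem} yields
$$\lim_{n\to\infty}\nu^n=\sum_{h\in G}\frac{1}{|G|}\delta_h=\nu_0.$$

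Third, the description of the $\omega$-limit follows directly from Theorem~\ref{teoremdynlim}: since $\bar\nu=\lim_n\nu^n=\nu_0$, continuity of convolution in each factor on the finite-dimensional space $\mathcal{P}(G)$ gives
$$L_\omega(\mu)=\{\nu_0*\mu\}\qquad\text{for every }\mu\in\mathcal{P}(G).$$

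Fourth, I would establish uniqueness of the fixed point. If $\mu$ solves $\nu*\mu=\mu$, then by induction $\nu^n*\mu=\mu$ for all $n$, and letting $n\to\infty$ we obtain $\nu_0*\mu=\mu$. But $\nu_0$ is the normalized Haar (counting) measure on $G$, and a short direct calculation shows $\nu_0*\mu=\nu_0$ for every $\mu\in\mathcal{P}(G)$: writing $\mu=\sum_i q_i\delta_{g_i}$,
$$\nu_0*\mu=\sum_i q_i\,(\nu_0*\delta_{g_i})=\sum_i q_i\,\frac{1}{|G|}\sum_h\delta_{hg_i}=\sum_iq_i\,\nu_0=\nu_0,$$
using that left translation permutes $G$. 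Hence $\mu=\nu_0$, which also shows $\nu_0$ is itself a fixed point, so it is the unique one.

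The only mildly delicate point is matching conventions between the convolution $\nu*\mu$ and the matrix representation $\mu\cdot\nu(G^{-1}\times G)$ from Lemma~\ref{rep. da aplica��o conv}, but this was already settled by the earlier results, so no new obstacle arises; the argument is essentially a direct corollary of Theorem~\ref{main theorem} applied in the regime where the support exhausts the whole group.
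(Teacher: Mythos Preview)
Your proof is correct and follows essentially the same approach as the paper: the same choice $\mathcal{O}=\{\nu:\mathrm{supp}(\nu)=G\}$, the same appeal to acyclicity with $H=G$ and to Theorem~\ref{teoremdynlim}, and the same conclusion about the fixed point. Your arguments for density (via convex combination with $\nu_0$) and for uniqueness of the fixed point (via the direct computation $\nu_0*\mu=\nu_0$) are slightly more explicit than the paper's, which instead notes that $\mathcal{O}^c$ is a finite union of algebraic sets and appeals to the theory of doubly stochastic matrices, but the overall route is the same.
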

\begin{proof}  Consider the set,
$$\mathcal{O}=\{\nu=\sum_{i=0}^{|G|-1} p_{i} \delta_{g_i} \in \mathcal{P}(G) \; | \; p_{i}>0\}.$$
Thus, for $T_{\nu}$ with $\nu\in \mathcal{O}$, $\nu$ is an acyclic probability $H=G$, and Theorem~\ref{teoremdynlim}   claims that the $w-$limit set by $T_{\nu}$, here denoted by $L_{\omega}(\mu)$, is
$$L_{\omega}(\mu)= \frac{1}{|G|}\sum_{i=0}^{|G|-1}\delta_{g_i}   * \mu.$$
Moreover, $\mu$ is a recurrent point of the dynamics, that is, $\mu \in L_{\omega}(\mu)$, only if $\mu$ is solution of the Choquet-Deny equation $$\nu_0*\mu=\mu,$$
where $\displaystyle  \nu_{0}=\lim_{n\to\infty}\nu^{n}= \frac{1}{|G|}\sum_{i=0}^{|G|-1}\delta_{g_i}$.
From the theory of doubly stochastic matrices, we know that the only solution is $\mu=\nu_{0}$, since every fixed point is recurrent, there is just one of them. Thus, we just need to prove that  $\mathcal{O}$ is an open and dense set, but it is trivial because its complementary set is
$$\mathcal{O}^{c}=\{\nu=\sum_{i=0}^{|G|-1} p_{i} \delta_{g_i} \in \mathcal{P}(G) \; | \; \exists p_{i}=0\},$$  is an finite union algebraic sets in $\mathbb{R}^{|G|}$,  so it is closed with empty interior, what conclude the proof.
\end{proof}


\end{document}